\definecolor{Blue}{rgb}{0.1,0.1,0.5}
\newtheorem{theorem}{Theorem}[section]
\newtheorem{corollary}[theorem]{Corollary}
\theoremstyle{remark}
\newtheorem{remark}{Remark}
\DeclareMathOperator{\Var}{var}
\DeclareMathOperator{\argmax}{argmax}
\definecolor{myred}{RGB}{255,66,56}
\newcommand   \one    {\mathds{1}}
\newcommand   \Esp    {\mathbb{E}}
\newcommand   \Nset   {\mathbb{N}}
\newcommand   \Rset   {\mathbb{R}}
\newcommand   \dx     {\mathrm{d}x}
\newcommand   \dy     {\mathrm{d}y}
\newcommand \SnMC     {\hat{S}_{n}^{\rm MC}}
\newcommand \SnCV     {\hat{S}_{n}^{\rm cv}}
\newcommand \SnSt     {\hat{S}_{n}^{\rm str}}
\newcommand \DDelta   {\bm{\Delta}}
\newcommand \uDelta   {\underline{\Delta}}
\newcommand \FD       {F_{\uDelta}}
\begin{document}

\begin{center}
  \Large\linespread{1.0}\textsc{%
    Integration of bounded monotone functions: %
    Revisiting the nonsequential case, with a focus on unbiased Monte
    Carlo (randomized) methods}

\normalsize
\bigskip
Subhasish Basak$^{1, 2, \star}$ \& Julien Bect$^{2}$ \& Emmanuel Vazquez$^{2}$ 
\bigskip

{\it \selectlanguage{french} %
  $^{1}$ ANSES, Maison-Alfort, France.
  $^\star$E-mail: \texttt{subhasish.basak@centralesupelec.fr}\\
 $^{2}$ Université Paris-Saclay, CNRS, CentraleSupélec,\\
 Laboratoire des signaux et systèmes, Gif-sur-Yvette, France.}
\end{center}
\bigskip


\selectlanguage{french}

{\bf R\'esum\'e.} %
Dans cet article, nous revisitons le problème de l'intégration
numérique d'une fonction monotone bornée, en nous concentrant sur la
classe des méthodes de Monte Carlo non séquentielles. %
Nous établissons dans un premier une borne inférieure pour l'erreur
maximale dans~$L^p$ d'un algorithme non séquentiel, qui généralise
pour~$p > 1$ un théorème de Novak. %
Nous étudions ensuite, dans le cas~$p = 2$, l'erreur maximale de deux
méthodes sans biais---une méthode fondée sur l'utilisation d'une
variable de contrôle, et la méthode de l'échantillonnage stratifié.

{\bf Mots-cl\'es.} %
Intégration monotone, %
Monte Carlo, %
échantillonnage stratifié, %
échantillonnage hypercube latin, %
variable de contrôle, %
complexité

\medskip \selectlanguage{english}

{\bf Abstract.} %
In this article we revisit the problem of numerical integration
for monotone bounded functions, %
with a focus on the class of nonsequential Monte Carlo methods. %
We first provide new a lower bound on the maximal $L^p$ error of
nonsequential algorithms, %
improving upon a theorem of Novak when~$p > 1$. %
Then we concentrate on the case $p = 2$ and study the maximal error of
two unbiased methods---namely, a method based on the control variate
technique, and the stratified sampling method.

{\bf Keywords.} %
Monotone integration, %
Monte Carlo, %
stratified sampling, %
Latin hypercube sampling, %
control variate, %
information-based complexity

\bigskip\bigskip

\section{Introduction}
\label{sec:intro}

We address in this article the problem of constructing a numerical
approximation of the
expectation~$\Esp(g(Y)) = \int g(y)\, \mathrm{P}_Y(\dy)$, where $Y$~is
a real random variable with known distribution~$\mathrm{P}_Y$, and
$g$~is a real function that is bounded and monotone. %
Such a problem occurs naturally in applications where one is
interested in computing a risk using a model that provides an
increasing conditional risk~$g(Y)$ with respect to some random
variable~$Y$. %
This situation occurs for instance in the field of food safety, with
$Y$ a dose of pathogen and $g(Y)$~the corresponding probability of
food-borne illness \cite[see, e.g.,][]{perrin14}.

Assuming that the cumulative distribution function of~$Y$ is
continuous, the problem reduces after change of variable and scaling
to the computation of $S(f) = \Esp(f(X)) = \int_0^1 f(x)\, \dx$, %
where $X$ is uniformly distributed on~$\left[ 0, 1 \right]$ and
$f$~belongs to the class~$F$ of all non-decreasing functions defined
on~$\left[ 0, 1 \right]$ and taking values in~$\left[ 0, 1 \right]$. %
We work in this article in a fixed sample-size setting, where the
number $n$ of evaluations of~$f$ to be performed is chosen beforehand.

This problem was first studied by \cite{kiefer57}, who proved that
considering regularly-spaced evaluations at~$x_i = i/(n+1)$,
$1 \le i \le n$, and then using the trapezoidal integration rule
assuming~$f(0)=0$ and~$f(1) = 1$, is optimal in the worst-case sense
among all deterministic---possibly sequential---methods, %
with maximal error~$1/(2(n+1))$. %
\cite{novak92} later studied Monte Carlo (a.k.a.\ randomized) methods,
and established that sequential methods are better in this setting
than nonsequential ones, with a minimax rate of~$n^{-3/2}$ over~$F$
for the~$L^1$ error. %
Novak's proof relies on the construction of a particular two-stage
algorithm, using stratified sampling in the second stage.

This article revisits the nonsequential setting with a focus on
unbiased Monte Carlo methods, which are a key building block for the
construction of good (rate-optimal) sequential methods---as can be
learned from the proof of Theorem~3 in Novak's article. %
Section~\ref{sec:bound} derives a lower bound for the maximal
$L^p$-error of nonsequential methods, for any~$p\ge 1$, which is a
generalization of a result by \cite{novak92} concerning the~$L^1$
error. %
Sections~\ref{sec:cv} and \ref{sec:strata} then study the maximal
$L^2$~error (variance) of two simple unbiased methods, based
respectively on the control variate technique and on stratification. %
Section~\ref{sec:con} concludes the article with a discussion.

\section{A lower bound for the maximal $L^p$ error}
\label{sec:bound}

A nonsequential (also called non-adaptive) Monte Carlo method first
evaluates the function at $n$~random points $X_1$, \ldots, $X_n$
in~$[0, 1]$, %
and then approximates the integral~$S(f)$ using an estimator
\begin{equation}
  \hat{S}_n(f) = \varphi \left( X_1,\, f(X_1),\, \hdots,\, X_n,\, f(X_n) \right),
\end{equation}
where $\varphi: \left[ 0, 1 \right]^{2n} \rightarrow \mathbb{R}$ is a measurable
function. %
A nonsequential method is thus defined by two ingredients: the
distribution of~$(X_1, \ldots, X_n)$ and the function~$\varphi$. %
The worst-case $L^p$~error of such a method over the class~$F$ is
\begin{equation}
  e_p(\hat{S}_n) \;=\; \sup_{f \in F}\,
  \Esp\left( \left| S(f) - \hat{S}_n(f) \right|^p \right)^{1/p}.
\end{equation}

\begin{remark}
  The class of nonsequential Monte Carlo methods as usually defined in
  the literature also allows~$\hat S_n$ to be randomized (i.e.,
  allows~$\varphi$ to be a random function). %
  We have not considered randomized estimators in our definition,
  however, since Rao-Blackwell's theorem implies that they do not help
  in this setting, for any convex loss function.
\end{remark}

\medskip

\cite{novak92} proved that for any nonsequential Monte Carlo method
with sample size~$n$, the maximal $L^1$~error $e_1(\hat{S}_n)$ is
greater or equal to~$1/(8n)$. %
We generalize this result to the case of the $L^p$~error.

\medbreak

\begin{theorem}\label{th:lp}
  For any nonsequential Monte Carlo methods with sample size~$n$,
  \begin{equation*}
    e_p(\hat{S}_n) \;\geq\;
    \left(\frac{1}{2} \right)^{2+1/p}\, \frac{1}{n}\, .
  \end{equation*}
\end{theorem}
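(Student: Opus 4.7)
The plan is to adapt Novak's two-hypothesis argument, with a sharpened $L^p$ convexity estimate as the new ingredient. Given an arbitrary non-sequential method, I would exhibit a pair of almost-indistinguishable step functions in $F$ and reduce the problem to a hypothesis-testing lower bound on $L^p$ risk.

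The starting observation is the inequality
\[
|v - s_1|^p + |v - s_2|^p \;\geq\; \frac{|s_1 - s_2|^p}{2^{p-1}},
\]
valid for any $v, s_1, s_2 \in \Rset$ and $p \geq 1$. It is obtained by minimizing the left-hand side over $v$, the optimum being $v = (s_1 + s_2)/2$ by convexity of $t \mapsto t^p$ on $[0, \infty)$; this is the mechanism through which the exponent $1/p$ will enter the final bound. Then, for the given distribution of $(X_1, \ldots, X_n)$, I would partition $[0, 1]$ into $m = 2n$ intervals $I_k$ of equal length $1/(2n)$ and set $q_k = \Pr(\exists i : X_i \in I_k)$. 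A union bound and linearity of expectation yield $\sum_k q_k \leq \sum_k \sum_i \Pr(X_i \in I_k) = n$, so some interval $[a, b) = I_{k^\star}$ satisfies $\Pr(E) \geq 1/2$, where $E = \{X_i \notin [a, b) \text{ for all } i\}$. For such an interval, the step functions $f_1 = \one_{[b, 1]}$ and $f_2 = \one_{[a, 1]}$ both belong to $F$, differ only on $[a, b)$, and satisfy $S(f_2) - S(f_1) = b - a = 1/(2n)$; hence $\hat S_n(f_1) = \hat S_n(f_2)$ almost surely on $E$ because the estimator receives identical data.

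Applying the convexity inequality pointwise on $E$ with $v = \hat S_n(f_1) = \hat S_n(f_2)$ and $s_i = S(f_i)$, and then taking expectations, would give
\[
\Esp|\hat S_n(f_1) - S(f_1)|^p + \Esp|\hat S_n(f_2) - S(f_2)|^p \;\geq\; \frac{(b - a)^p}{2^{p-1}}\, \Pr(E).
\]
Since the left-hand side is bounded above by $2\, e_p(\hat S_n)^p$, substituting $b - a = 1/(2n)$ and $\Pr(E) \geq 1/2$ and taking $p$-th roots yields the announced bound $e_p(\hat S_n) \geq (1/2)^{2 + 1/p}/n$. The main difficulty is not really technical but lies in the choice of the partition size: $m = 2n$ produces the clean stated constant (and recovers Novak's $1/(8n)$ for $p = 1$), while a finer optimization over $m$ would only improve the constant without changing the $1/n$ rate.
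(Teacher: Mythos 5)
Your proposal is correct and follows essentially the same route as the paper's proof: partition $[0,1]$ into $2n$ equal subintervals, find one missed by all evaluation points with probability at least $1/2$, use a pair of step functions differing only on that subinterval, and apply the convexity inequality $|v-s_1|^p+|v-s_2|^p\ge |s_1-s_2|^p/2^{p-1}$. The only differences are cosmetic (explicit formulas for the two step functions and a swap of their labels), and your constant-tracking matches the stated bound exactly.
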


\medbreak

Observe that Novak's lower bound is recovered for~$p = 1$. %
Using Theorem~\ref{th:lp} with $p=2$ we can deduce of lower bound for
the variance of unbiased nonsequential methods.

\medbreak

\begin{corollary}
  \label{cor:ub_low_bdd}
  For any unbiased nonsequential Monte Carlo method with sample size~$n$,
  \begin{equation*}
    \sup_{f \in F}\, \Var\left( \hat{S}_{n}(f) \right) \;\geq\; \frac{1}{32n^2}.
  \end{equation*}
\end{corollary}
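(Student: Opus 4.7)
The plan is very short: this corollary is an almost immediate consequence of Theorem~\ref{th:lp} specialized to $p=2$, combined with the definition of unbiasedness.

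First I would apply Theorem~\ref{th:lp} with $p = 2$, which yields
\[
  e_2(\hat{S}_n) \;\geq\; \left(\tfrac{1}{2}\right)^{2+1/2}\,\frac{1}{n}
  \;=\; \frac{1}{2^{5/2}\, n}.
\]
Squaring and using the definition of $e_2$ gives
\[
  \sup_{f \in F}\, \Esp\!\left( \left| S(f) - \hat{S}_n(f) \right|^2 \right)
  \;\geq\; \frac{1}{32\, n^2}.
\]

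The second step is to invoke unbiasedness: since $\Esp(\hat{S}_n(f)) = S(f)$ for every $f \in F$, the $L^2$ error coincides with the variance, i.e.\ $\Esp(|S(f) - \hat{S}_n(f)|^2) = \Var(\hat{S}_n(f))$. Substituting into the inequality above yields the claim. There is no real obstacle here; the only thing to be careful about is that the supremum over $f$ passes through the squaring, which it does because the function $t \mapsto t^2$ is nondecreasing on $[0,\infty)$.
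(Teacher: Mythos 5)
Your proof is correct and is exactly the argument the paper intends: apply Theorem~\ref{th:lp} with $p=2$, square, and use unbiasedness to identify the mean-squared error with the variance. Nothing is missing.
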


\medbreak

\begin{proof}[Proof of Theorem~\ref{th:lp}]
  Consider a nonsequential Monte Carlo methods with evaluation
  points~$X_1$, \ldots, $X_n$ and estimator~$\hat S_n$.
  Divide the interval $[0, 1]$ into $2n$ equal subintervals of length
  $1/(2n)$: %
  then at least one of the subintervals, call it~$I$, will contain no
  evaluation point with probability at least~$1/2$.
  Now construct two functions $f_1, f_2 \in F$ that are both equal to
  zero on the left of~$I$, equal to one on the right, and such that
  $f_1 = 1$ and $f_2 = 0$ on $I$. %
  Then $S(f_1) - S(f_2) = 1/(2n)$, and $\hat S_n(f_1) = \hat S_n(f_2)$
  on the event $A = \{\{X_1, X_2, \hdots, X_n\} \cap I = \emptyset\}$,
  since $f_1$~and~$f_2$ coincide outside of~$I$. %
  It follows that
  \begin{align*}
    \left( e_p(\hat{S}_n) \right)^p
    & \;\geq\; \sup_{f\in \{ f_1, f_2 \}}\, \Esp\bigl( | S(f)- \hat{S}_n(f) |^p \bigr)
      \;\geq\; \frac{1}{2}\, \sum_{j=1}^2 \Esp\bigl( |S(f_j)-\hat{S}_n(f_j) |^p \bigr) \\
    & \;\geq\; \frac{1}{2}\, \sum_{j=1}^2 \Esp(|S(f_j)-\hat{S}_n(f_j)|^p \cdot \one_{A})
      \;=\; \frac{1}{2} \sum_{j=1}^2 \Esp(|S(f_j)-T|^p \cdot \one_{A}),
  \end{align*}
  where $T$ denotes the common value of~$\hat S_n(f_1)$
  and~$\hat S_n(f_2)$ on~$A$. %
  We conclude that
  \begin{equation*}
    \left( e_p(\hat{S}_n) \right)^p
    \;\geq\; \frac{1}{2}\; \frac{|I|^p}{2^{p-1}}\; P(A)
    \;\geq\; \big(\frac{1}{2}\big)^{2p+1} \cdot \frac{1}{n^p},
  \end{equation*}
  using the fact that, for any $a, b, x \in \mathbb{R}$ and~$p\ge 1$,
  $|a-x|^p + |b-x|^p \geq |a-b|^p / 2^{p-1}$.
\end{proof}

\section{Uniform i.i.d.\ sampling}
\label{sec:cv}

The simple Monte Carlo method is the most common example of a
nonsequential method: %
the evaluation points $X_1$, \ldots, $X_n$ are drawn independently,
uniformly in~$\left[ 0, 1 \right]$, and then the integral is estimated
by $\SnMC(f) = \frac{1}{n}\sum_{i=1}^n f(X_i)$. %
The estimator is clearly unbiased, and it follows from Popoviciu's
inequality---i.e., $\Var(Z) \le 1/4$ for any random variable~$Z$
taking values in~$\left[0, 1 \right]$---that
\begin{equation*}
  \left( e_2\left( \SnMC \right) \right)^2
  \;=\; \max_{f \in F}\, \Var\left( \SnMC(f) \right)
  \;=\; \frac{1}{4n}.
\end{equation*}
The maximal error is attained when $f$~is a unit step function jumping
at~$x_0 = 1/2$. %
It turns out that a smaller error can be achieved, for the same
(uniform i.i.d.) sampling scheme, using the control variate
technique. %
More specifically, we consider the control variate
$\tilde{f}(X_i) = X_i$ and set
\begin{equation*}
  \SnCV(f) \;=\; \frac{1}{n}\,
  \sum_{i=1}^n \left( f(X_i) - \tilde{f}(X_i) \right)
  \,+\, \frac{1}{2}.  
\end{equation*}

\medbreak

\begin{theorem}
  \label{th:cv}
  The estimator~$\SnCV(f)$ is unbiased, and satisfies
  \begin{equation*}
    \left( e_2\left( \SnCV \right) \right)^2
    \;=\; \max_{f \in F}\, \Var\left( \SnCV(f) \right)
    \;=\; \frac{1}{12n}.
  \end{equation*}
  The maximal error is attained for any unit step function.
\end{theorem}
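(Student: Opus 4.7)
The plan has three steps. First, unbiasedness of $\SnCV$ follows by linearity from $\Esp(X_i)=1/2$, and independence of the $X_i$'s gives $\Var(\SnCV(f))=\Var(f(X)-X)/n$ with $X\sim\U(0,1)$, so the theorem reduces to showing $\sup_{f\in F}\Var(f(X)-X)=1/12$, attained at every unit step function. Second, for $f(x)=\one_{x\ge x_0}$ with $x_0\in[0,1]$, splitting the integral at $x_0$ yields $\Esp(f(X)-X)=1/2-x_0$ and $\Esp[(f(X)-X)^2]=[x_0^3+(1-x_0)^3]/3$; the resulting variance simplifies to $1/12$ independently of $x_0$, which provides the lower bound and identifies the extremal functions.

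For the matching upper bound I would argue by convexity. Decompose $\Var(f(X)-X)=\Var(X)-\phi(f)$ where
\[
  \phi(f) \,:=\, 2\operatorname{Cov}(f(X),X) - \Var(f(X));
\]
the goal becomes $\phi(f)\ge 0$ on $F$. Now $f\mapsto\operatorname{Cov}(f(X),X)$ is linear, while $f\mapsto\Var(f(X))=\int_0^1 f^2\,\dx-\bigl(\int_0^1 f\,\dx\bigr)^2$ is a positive-semidefinite quadratic form and hence convex in $f$. Therefore $\phi$ is \emph{concave} on the convex set $F$, and by the computation of Step~2 it vanishes on every unit step function. Writing every $f\in F$ as an average of unit step functions through the layer-cake representation $f(x)=\int_0^1 \one_{x\ge f^{-1}(t)}\,\mathrm{d}t$, Jensen's inequality for the concave functional $\phi$ yields
\[
  \phi(f) \,\ge\, \int_0^1 \phi\bigl(\one_{\cdot\,\ge\, f^{-1}(t)}\bigr)\,\mathrm{d}t \,=\, 0.
\]

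The main technical obstacle is justifying this functional Jensen step. A clean route is to discretize the layer cake as $f_N=\tfrac{1}{N}\sum_{k=1}^{N}\one_{\cdot\,\ge\, f^{-1}(k/N)}$, apply finite-dimensional Jensen to the (polynomial, hence continuous) functional $\phi$, and pass to the limit using its $L^2$-continuity. A direct alternative expands $\phi(f)$ through the layer-cake representation and reduces the nonnegativity of $\phi(f)$ to the positive semidefiniteness of the kernel $\min(s,t)-st$ (the Brownian-bridge covariance), an elementary fact.
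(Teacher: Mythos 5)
Your proposal is correct, but it reaches the upper bound by a genuinely different route than the paper. The paper first approximates $f$ by staircase functions in $F_m$ (controlling the variance discrepancy by $1/m$), and then runs a finite-dimensional perturbation argument: the variance is a convex quadratic in the common value of any flat portion of the staircase, so pushing that value to an endpoint removes a step without decreasing the variance, and iterating lands on a unit step function. You instead work globally: writing $\Var(f(X)-X)=\Var(X)-\phi(f)$ with $\phi(f)=2\operatorname{Cov}(f(X),X)-\Var(f(X))$ concave on $F$ and vanishing on unit step functions, and then exploiting the layer-cake representation of $f$ as a mixture of unit step functions. Both arguments are at bottom convexity arguments, but yours uses the extreme-point structure of $F$ in one stroke rather than coordinate by coordinate, and it yields the identity $\phi(f)=\tfrac12\int_0^1\!\int_0^1 \left[K(c_s,c_s)+K(c_t,c_t)-2K(c_s,c_t)\right]\mathrm{d}s\,\mathrm{d}t$ with $K(u,v)=\min(u,v)-uv$ and $c_t$ the generalized inverse of $f$; since $K(u,u)+K(v,v)-2K(u,v)=|u-v|\left(1-|u-v|\right)\ge 0$ pointwise, your second route closes the flagged Jensen gap by an entirely elementary computation (note that what you need is $K(u,v)\le\tfrac12\left(K(u,u)+K(v,v)\right)$, which follows from positive semidefiniteness via Cauchy--Schwarz but here is just the displayed identity). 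Two small points to tidy up: the representation $f(x)=\int_0^1\one_{x\ge c_t}\,\mathrm{d}t$ can fail on the (at most countable) set of discontinuities of $f$, which is harmless for all the integrals involved but should be said; and your direct computation of the step-function variance can be shortcut, as the paper does, by observing that $f(X)-X$ is uniform on $[-x_0,1-x_0]$.
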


\medbreak 

\begin{proof}
  The estimator is unbiased since $\Esp(\tilde{f}(X_i)) = 1/2$, and
  therefore the mean-squared error is equal to
  $\Var(\SnCV(f)) = \frac{1}{n}\Var(f(X)-X)$. %
  For a unit step function $f = \one_{\left[ x_0, 1 \right]}$ with a
  jump at $x_0 \in [0, 1]$, the random variable $f(X) - X$ is
  uniformly distributed over $\left[-x_0, 1-x_0\right]$, which yields
  $\Var(\SnCV(f)) = 1/(12n)$ as claimed. %
  It remains to show that $\Var(f(X)-X) \le \frac{1}{12}$ for
  all~$f \in F$.

  Let $F_m \subset F$ denote the class of all non-decreasing staircase
  functions of the form
  $f = \sum_{k=1}^m \alpha_k \cdot \one_{(\frac{k-1}{m},
    \frac{k}{m}]}$, with
  $0 \leq \alpha_1 \leq \alpha_2 \leq \hdots \leq \alpha_m \leq 1$.
  For any $f \in F$, consider the piecewise-constant approximation
  $f_m \in F_m$ defined by averaging~$f$ over each subinterval of
  length~$1/m$. Then, $\Esp(f_m(X)) = \Esp(f(X))$ and
  $\left| \Var\left( f_m(X) - X \right) - \Var\left( f(X) - X \right)
  \right| \le \frac{1}{m}$. Thus,
  \begin{equation}
    \label{eq:apprx}
    \sup_{f \in F} \Var\left( f(X) - X \right)
    \;=\; \lim_{m \to \infty} 
    \sup_{f \in F_m} \Var\left( f(X) - X \right).
  \end{equation}

  Let us now show that $\Var\left( f(X) - X \right)$ is maximized
  over~$F_m$ when $f$~is a unit step function. %
  Pick any
  $f = \sum_{k=1}^m \alpha_k \cdot \one_{(\frac{k-1}{m}, \frac{k}{m}]}
  \in F_m$. %
  Set $\alpha_0 = 0$ and $\alpha_{m+1} = 1$. %
  If $f$ is not a unit step function, then there exist
  $k_1, k_2 \in \{ 1, \ldots, m \}$ such that $k_1 \le k_2$ and
  $\alpha_{k_{1}-1} < \alpha_{k_1} = \hdots = \alpha_{k_2} <
  \alpha_{k_{2}+1}$. %
  Denote by~$f_u \in F_m$ the function obtained by changing
  the
  common value of~$\alpha_{k_1}$, \ldots, $\alpha_{k_2}$ in~$f$ to~$u \in \left[ \alpha_{k_{1}-1}, \alpha_{k_{2}+1} \right]$. %
  The variance $\Var(f_u(X)-X)$ is a convex function of~$u$,
  since it can be
  expanded as $au^2 + bu + c$ with
  $a = \frac{k_2-k_1+1}{m} (1-\frac{k_2-k_1+1}{m}) >0$. %
  Consequently, we have $\Var(f_u(X)-X) > \Var(f(X)-X)$ at one of the
  two endpoints
  of~$\left[ \alpha_{k_{1}-1}, \alpha_{k_{2}+1} \right]$. %
  Note that the corresponding staircase function~$f_u$ has one fewer
  step than~$f$. %
  Iterating as necessary, we conclude that for any~$f \in F_m$ there
  exists a unit step function~$g \in F_m$ such that
  $\Var\left( f(X) - X\right) \le \Var\left( g(X) - X \right) =
  \frac{1}{12}$. %
  Therefore
  $\sup_{f \in F_m} \Var\left( f(X) - X \right) = \frac{1}{12}$, which
  completes the proof.
\end{proof}

\section{Stratified sampling}
\label{sec:strata}

Consider now a stratified sampling estimator with $K$~strata:
\begin{equation}\label{equ:SnSt-def}
  \SnSt(f) \;=\;
  \sum_{k=1}^K w_k \cdot \frac{1}{n_k}
  \sum_{i=1}^{n_k} f\left( X_{k,i} \right),
\end{equation}
where the $k$-th stratum is $I_k = \left[ x_{k-1}, x_k \right]$,
$0 = x_0 < x_1 < \cdots < x_{K-1} < x_K = 1$, %
the weight $w_k = \left| x_{k-1} - x_k \right|$ is the length of the
$k$-th stratum, %
the allocation scheme $(n_1, \ldots, n_K)$ is such that $n_k \ge 0$
for all~$k$ and $\sum_k n_k = n$, %
and the random variables~$X_{k,i}$ are independent, %
with the $X_{k,i}$s uniformly distributed in~$I_k$. %
Note that the sampling points are no longer identically distributed
here. %
The estimator $\SnSt(f)$ is unbiased, with variance
\begin{equation}\label{equ:SnSt-var}
  \Var\left( \SnSt(f) \right) \;=\;
    \sum_{k=1}^K \frac{w_k^2}{n_k}\, \Var\left( f\left( X_{k,1} \right) \right).
\end{equation}

\begin{theorem}
  \label{th:st}
  For any $K \le n$, any choice of strata and any allocation scheme,
  the stratified sampling estimator~\eqref{equ:SnSt-def} satisfies
  \begin{equation}\label{equ:SnSt-max}
    \left( e_2\left( \SnSt \right) \right)^2
    \;=\; \max_{f \in F}\, \Var\left( \SnSt(f) \right)
    \;=\; \frac{1}{4}\, \max_k \frac{w_k^2}{n_k},    
  \end{equation}
  The maximal error is attained for a unit step function with a jump
  at the middle of~$I_{k^*}$, where $k^* \in \argmax w_k^2 / n_k$. %
  The minimal value of the maximal error~\eqref{equ:SnSt-max} is
  $\frac{1}{4n^2}$, and is obtained with $K = n$ strata of equal
  lengths ($w_k = n^{-1}$ and $n_k = 1$ for all~$k$).
\end{theorem}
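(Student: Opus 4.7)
The plan is to reduce the per-function variance to a single-stratum contribution, and then to optimize over strata and allocations.

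Starting from~\eqref{equ:SnSt-var}, I would set $a_k = \inf_{x \in I_k} f(x)$ and $b_k = \sup_{x \in I_k} f(x)$ for each stratum~$I_k$. Since $X_{k,1}$ is uniform on~$I_k$ and $f(X_{k,1}) \in [a_k, b_k]$, Popoviciu's inequality gives the stratum-level bound $\Var(f(X_{k,1})) \le (b_k - a_k)^2/4$. The monotonicity of~$f$ forces the chain $0 \le a_1 \le b_1 \le a_2 \le \cdots \le b_K \le 1$, so the oscillations telescope into the global constraint $\sum_{k=1}^K (b_k - a_k) \le b_K - a_1 \le 1$.

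Writing $c_k = w_k^2/n_k$ and $\delta_k = b_k - a_k \ge 0$, I would then chain
\[
\Var\bigl(\SnSt(f)\bigr) \;\le\; \tfrac{1}{4}\sum_k c_k\, \delta_k^2
\;\le\; \tfrac{1}{4}\,\bigl(\max_k c_k\bigr)\,\sum_k \delta_k^2
\;\le\; \tfrac{1}{4}\,\max_k c_k,
\]
where the last inequality uses $\sum_k \delta_k^2 \le \bigl(\sum_k \delta_k\bigr)^2 \le 1$ (valid since the $\delta_k$ are non-negative). For sharpness, I would pick $k^* \in \argmax_k w_k^2/n_k$, let $x_0$ be the midpoint of~$I_{k^*}$, and take $f = \one_{[x_0, 1]}$: then $f(X_{k,1})$ is deterministic on every stratum other than $I_{k^*}$, on which it is $\mathrm{Bernoulli}(1/2)$, giving exactly $\Var\bigl(\SnSt(f)\bigr) = c_{k^*}/4$ and establishing~\eqref{equ:SnSt-max}.

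For the final claim, I would minimize $\max_k w_k^2/n_k$ under the constraints $w_k > 0$, $\sum_k w_k = 1$, $n_k \ge 1$ integer, and $\sum_k n_k = n$ with $K \le n$. For fixed $K$ and~$(n_k)$, the bound $w_k \le \sqrt{M n_k}$ for all~$k$ combined with $\sum_k w_k = 1$ gives $M \ge 1/\bigl(\sum_k \sqrt{n_k}\bigr)^2$, with equality at the equalized allocation $w_k \propto \sqrt{n_k}$. It remains to maximize $\sum_k \sqrt{n_k}$: Cauchy--Schwarz yields $\sum_k \sqrt{n_k} \le \sqrt{K \sum_k n_k} = \sqrt{Kn} \le n$, with equality forcing $K = n$ and $n_k = 1$ for every~$k$. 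This produces the minimum value $1/(4n^2)$ for \eqref{equ:SnSt-max}.

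The main conceptual step is the reduction in the second paragraph: at first sight it is surprising that the maximum variance is governed by $\max_k c_k$ rather than a sum $\sum_k c_k$, and the monotonicity constraint $\sum_k \delta_k \le 1$ is exactly what drives this concentration onto a single stratum. The optimization in the last paragraph, despite its minimax form and integer constraint, is then routine because the continuous relaxation already prescribes $n_k = 1$.
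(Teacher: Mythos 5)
Your proof is correct. For the identity~\eqref{equ:SnSt-max} your route is essentially the paper's: both arguments apply Popoviciu's inequality stratum by stratum, use monotonicity to turn the per-stratum oscillations into a budget $\sum_k \delta_k \le 1$, bound $\sum_k c_k \delta_k^2$ by $\bigl(\max_k c_k\bigr)\sum_k \delta_k^2 \le \max_k c_k$, and exhibit sharpness with a unit step at the midpoint of the maximizing stratum. (The paper phrases this by partitioning $F$ into the classes $\FD$ of prescribed increments and computing the exact maximum $\frac{1}{4}\sum_k w_k^2\Delta_k^2/n_k$ over each class; the content is the same, with $\Delta_k^2 \le \Delta_k$ playing the role of your $\sum_k\delta_k^2 \le \bigl(\sum_k\delta_k\bigr)^2$ --- both valid.) Where you genuinely diverge is the final optimization. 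The paper observes that any stratum with $n_k \ge 2$ can be split into $n_k$ equal sub-strata without increasing $\max_k w_k^2/n_k$, which reduces everything to the case $K = n$, $n_k = 1$, where the answer is immediate. You instead derive the dual bound $\max_k w_k^2/n_k \ge \bigl(\sum_k \sqrt{n_k}\bigr)^{-2}$ from $w_k \le \sqrt{M n_k}$ and finish with Cauchy--Schwarz, $\sum_k\sqrt{n_k} \le \sqrt{Kn} \le n$. Your version has the advantage of identifying the optimal weights $w_k \propto \sqrt{n_k}$ for every fixed allocation and of pinning down the equality case explicitly, while the paper's subdivision trick is shorter and more intuitive; both are complete. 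One pedantic point common to both proofs: the paper's setup nominally allows $n_k = 0$, for which \eqref{equ:SnSt-def} and~\eqref{equ:SnSt-var} are undefined, so your implicit restriction to $n_k \ge 1$ is the right reading of the statement.
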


The optimal stratified sampling method can be seen as a
one-dimensional special case of the Latin Hypercube Sampling (LHS)
method \citep{mckay79}. %
(On a related note, \cite{mckay79} prove that, in any dimension, the
LHS method is preferable to the simple Monte Carlo method if the
function is monotone in each of its arguments.)

\begin{proof}
  For all~$K \in \Nset^*$, let
  $\DDelta_K = \left\{ \left( \Delta_1,\, \ldots,\, \Delta_K
    \right) \in\Rset_+^K \mid \sum_{k=1}^K \Delta_k \le 1
  \right\}$. %
  For a given stratified sampling method with $K$~strata, for all
  $\uDelta \in \DDelta_K$, define
  \begin{equation*}
    \FD \;=\; \left\{
      f \in F \,\bigm|\, %
      \forall k \in \{ 1, \ldots, K \},\, %
      f(x_k) - f(x_{k-1}) = \Delta_k
    \right\}.
  \end{equation*}
  Then it follows from~\eqref{equ:SnSt-var} and Popoviciu's inequality
  that
  \begin{equation}\label{equ:max-f-ND}
    \max_{f \in \FD}\, \Var\left( \SnSt(f) \right) \;=\;
    \frac{1}{4}\, \sum_{k=1}^K \frac{w_k^2 \Delta_k^2}{n_k},
  \end{equation}
  where the maximum is attained for a non-decreasing staircase
  function with jumps of height~$\Delta_k$ at the middle of the
  strata. %
  Note that $\sum_{k=1}^K \Delta_k^2 \leq \sum_{k=1}^K \Delta_k \leq 1$.
  Therefore, the right-hand side of~\eqref{equ:max-f-ND} is
  upper-bounded by $\frac{1}{4}\, \max_k w_k^2 / n_k$, %
  which is indeed the value of the variance~\eqref{equ:SnSt-var} when
  $f$ is a unit step function with a jump at the middle of the stratum
  where $w_k^2 / n_k$ is the largest.

  In order to prove the second part of the claim, observe that
  any stratum with~$n_k \ge 2$ can be further divided into~$n_k$
  sub-strata of equal lengths without increasing the upper bound. %
  Considering then the case where $K = n$ and $n_k = 1$ for all~$k$,
  the upper bound reduces to~$\frac{1}{4}\, \max_k w_k^2$, which is
  minimal when $w_1 = \cdots = w_n = n^{-1}$ since $\sum_k w_k = 1$.
\end{proof}

\section{Discussion}
\label{sec:con}

The stratified sampling (LHS) method provides the best-known variance
upper bound over the class~$F$ for an unbiased nonsequential method as
soon as~$n \ge 3$, but is outperformed by the control variate method
of Section~\ref{sec:cv} when~$n \le 2$. %
We do not know at the moment if these results are optimal in the class
of unbiased nonsequential methods. %
(The ratio between the best variance upper bound and the lower bound
of Corollary~\ref{cor:ub_low_bdd} is $\frac{8}{3} \approx 2.67$
for~$n=1$, $\frac{16}{3} \approx 5.33$ for~$n = 2$ and $8$
for~$n \ge 3$.)

Relaxing the unbiasedness requirement, it turns out that both methods
are outperformed for all~$n$ by the (deterministic) trapezoidal
method discussed in the introduction, which has a worst-case squared
error of $1/(4(n+1)^2)$. %
The ratio of worst-case mean-squared errors, however, is never very
large---at most~$\frac{16}{9} \approx 1.78$---and goes to~$1$ when~$n$ goes to infinity.

Directions for future work include constructing better sequential
methods using the findings of this article, and extending our results
to mutivariate integration with partial monotonicity \citep[see,
e.g.,][Section~2.1]{mckay79}.

\bibliographystyle{unsrtnat}
\bibliography{sbasak-monotone-nonseq.bib}

\newpage

\begin{tcolorbox}
  This work is part of the ArtiSaneFood project (ANR-18-PRIM-0015),
  which is part of the PRIMA program supported by the European Union.
\end{tcolorbox}

\end{document}